\documentclass[11pt]{amsart}

\usepackage[margin=1.22in]{geometry}
\usepackage{amsmath, amssymb, amsthm, graphicx, enumerate, tikz, float, color}
\usepackage{mathtools, comment}
\usepackage{wasysym}
\usepackage[misc]{ifsym}
\usepackage[colorlinks]{hyperref}
\usepackage{orcidlink}

\hypersetup{citecolor=blue}
\usetikzlibrary{matrix,arrows,decorations.pathmorphing}

\newtheorem{theorem}{Theorem}[section]

\newtheorem*{main}{Main Theorem}

\theoremstyle{definition}

\newtheorem{example}[theorem]{Example}

\theoremstyle{remark}
\newtheorem{remark}[theorem]{Remark}

\DeclareMathOperator{\cd}{cd}
\DeclareMathOperator{\Irr}{Irr}
\DeclareMathOperator{\Stab}{Stab}

\numberwithin{equation}{section}

\makeatletter
\@namedef{subjclassname@2020}{%
  \textup{2020} Mathematics Subject Classification}
\makeatother

\begin{document}

\allowdisplaybreaks

\title[Solvable groups whose graphs generalize the bowtie]{Constructing solvable groups whose character degree graphs generalize the bowtie}

\author[J. Laubacher | M.L. Lewis | L. Ravaglia | A. Summers]{Jacob Laubacher \orcidlink{0000-0003-0045-7951} | Mark L. Lewis \orcidlink{0000-0001-9627-6922} | Lorenzo Ravaglia \orcidlink{0009-0000-4891-4854} | Andrew Summers \orcidlink{0009-0005-5561-6309}}
\address{Jacob Laubacher \orcidlink{0000-0003-0045-7951} | Department of Mathematics, Hillsdale College | Hillsdale, Michigan 49242, USA}
\email{\textcolor{magenta}{jlaubacher@hillsdale.edu}}

\address{Mark L. Lewis \orcidlink{0000-0001-9627-6922} | Department of Mathematical Sciences, Kent State University | Kent, Ohio 44242, USA}
\email{\textcolor{magenta}{lewis@math.kent.edu}}

\address{Lorenzo Ravaglia \orcidlink{0009-0000-4891-4854} | Department of Mathematics, Hillsdale College | Hillsdale, Michigan 49242, USA}
\email{\textcolor{magenta}{lravaglia@hillsdale.edu}}

\address{Andrew Summers \orcidlink{0009-0005-5561-6309} | Department of Mathematical Sciences, Kent State University | Kent, Ohio 44242, USA}
\email{\textcolor{magenta}{asumme19@kent.edu}}

\date{\today}

\subjclass[2020]{Primary 20C15; Secondary 20D10, 05C75}

\keywords{Character degrees, solvable groups, families of graphs\\\indent\emph{Corresponding author.} Jacob Laubacher \Letter~\href{mailto:jlaubacher@hillsdale.edu}{jlaubacher@hillsdale.edu} \phone~(517) 607-3285.}

\begin{abstract}
We present here a generalized construction of a finite solvable group whose prime character degree graph has the shape and structure of the bowtie graph. As with the original bowtie, the graphs obtained by this generalized construction, under certain restrictions, cannot be realized by the usual method of taking direct products of smaller graphs. Within the condition of $n=1$, we show how this recovers the original bowtie graph, which has five vertices. We also provide examples and explicit choices of primes which generate graphs with more vertices.
\end{abstract}

\maketitle

\section{Introduction}

Throughout this paper, we will let $G$ denote a finite solvable group. We follow convention and write $\Irr(G)$ for the set of irreducible characters of $G$ and $\cd(G)$ for the set of irreducible character degrees of $G$. With this in mind, one can then draw the corresponding prime character degree graph of $G$, which we denote by $\Delta(G)$. Let $V(\Delta(G))$ denote the vertex set of $\Delta(G)$. There is a vertex $p\in V(\Delta(G))$ if $p$ is a prime number and $p\mid a$ for some $a\in\cd(G)$. Furthermore, there is an edge between two vertices $p$ and $q$ in $V(\Delta(G))$ if $pq\mid b$ for some $b\in\cd(G)$. By construction, $\Delta(G)$ is a simple graph, and we note that the words ``prime" and ``vertex" become synonymous in this context. 

One approach to studying prime character degree graphs is via classifications of graphs with a fixed number of vertices (like \cite{L5}, for example). In such classifications, the main question one asks is: given a graph $\Gamma$, does there exist a finite solvable group $G$ such that $\Delta(G)=\Gamma$? When such a group $G$ exists, the graph $\Gamma$ is said to ``occur" and is referred to as an ``occurring graph." Otherwise, the graph is said to be ``non-occurring," or in some cases, remains unclassified.

These classifications often rely on a wide range of approaches, from landmark results like P\'alfy's condition in \cite{P} (or its generalized version as seen in \cite{A}), to structural results such as the main theorems of \cite{LM}, or even investigations of certain families of related graphs as in \cite{BL}. For a given graph $\Gamma$, it is often much easier and more common to answer ``no" to the question of occurrence, meaning that there is no solvable group $G$ such that $\Delta(G)=\Gamma$. In fact, it is a relative rarity for a graph to occur as the prime character degree graph of some solvable group, as shown for disconnected graphs in particular in \cite{LSDisc}, as well as in the classifications by number of vertices (see \cite{H}, \cite{Z}, \cite{L5}, \cite{BLL}, \cite{LMS}, \cite{LS}, and \cite{Biss}). We highlight the aggregate of these investigations in Figure \ref{figTable}. Due to this rarity, methods which construct solvable groups that achieve certain graphs are quite valuable.

\begin{figure}[htb]
    \centering
    \resizebox{\textwidth}{!}{
    \begin{tabular}{c|c|c|c|c}
        Number of & Total Number & Number of Graphs & Number of Graphs & Number of Graphs\\
        Vertices & of Graphs & That Occur & That Do Not Occur & Unclassified\\
        \hline
        1 & 1 & 1 & 0 & 0 \\
        2 & 2 & 2 & 0 & 0 \\
        3 & 4 & 3 & 1 & 0 \\
        4 & 11 & 5 & 6 & 0 \\
        5 & 34 & 9 & 24 & 1 \\
        6 & 156 & 15 & 132 & 9 \\
        7 & 1044 & 24 & 976 & 44 \\
        8 & 12346 & 39 & 12103 & 204\\
    \end{tabular}
    }
    \caption{Number of occurring, non-occurring, and unclassified graphs}
    \label{figTable}
\end{figure}

One example of such a construction can be found in \cite{L3}, where Lewis constructs the first instance of a solvable group whose prime character degree graph has a diameter of three. Afterwards, in her dissertation (see \cite{D}), Dugan generalized the aforementioned construction, allowing for other graphs of diameter three to be classified as ``occurring." The graph that Lewis constructed has six vertices and is therefore formally classified in \cite{BLL} while the generalized construction from Dugan has been employed to classify diameter three graphs with seven vertices (see \cite{LMS}) and eight vertices (see \cite{LS}), which nicely showcases the use and importance of such a generalized construction. 

Likely the most common method for showing a given graph occurs is via direct products of smaller occurring character degree graphs. Let $G$ and $H$ be finite solvable groups with character degree graphs $\Delta(G)$ and $\Delta(H)$, respectively. The direct product $G\times H$ is also a finite solvable group, and, following theorem 4.21 of \cite{I}, $\Delta(G\times H)$ is relatively easy to determine. The vertex set $V(\Delta(G\times H))$ is simply the union $V(\Delta(G))\cup V(\Delta(H))$, and an edge is drawn between two vertices $p$ and $q$ if and only if one of the following conditions is met:
\begin{enumerate}
    \item $pq$ is an edge of $\Delta(G)$;
    \item $pq$ is an edge of $\Delta(H)$;
    \item $p\in V(\Delta(G))$ and $q\in V(\Delta(H))$;
    \item $p\in V(\Delta(H))$ and $q\in V(\Delta(G))$.
\end{enumerate}
With this in mind, a good number of graphs can be constructed by taking direct  products of occurring character degree graphs with fewer vertices. In some situations, it can even be determined from $\Delta(G)$ that the underlying group $G$ must be a direct product of subgroups (see the main results of \cite{LM}). This direct product technique does have its limitations, however, and it is quite common to come across graphs which cannot be constructed in this way, again highlighting the importance of alternative methods.

In this paper, our goal is to build off of the construction of the bowtie graph in example 7.1 of \cite{L5}, where the solvable group that is constructed models those seen in \cite{I2}. By generalizing this construction, it will allow us to construct solvable groups whose prime character degree graphs specifically cannot be represented as direct products in the sense of the previous paragraph. We formulate our result as follows:

\begin{main}
There exists a finite solvable group $G$ such that its prime character degree graph $\Delta(G)$ is the graph seen in Figure \ref{figpqr}.
\end{main}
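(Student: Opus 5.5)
The plan is to imitate the construction of the bowtie in example 7.1 of \cite{L5}, which follows \cite{I2}, with the fixed exponents replaced by parameters depending on $n$. I expect the vertex set of Figure \ref{figpqr} to consist of one central prime together with two families of primes, one family for each ``wing''. I will build $G$ as a semidirect product $G = V \rtimes H$ with the following ingredients:
\begin{itemize}
\item $V$ is the additive group of a finite field $F$ of characteristic $p$ (or a direct product of two such modules, one for each wing);
\item $H = C \rtimes \Gamma$, where $C \le F^{\times}$ is a suitable cyclic subgroup acting by multiplication;
\item $\Gamma$ is a subgroup of $\mathrm{Gal}(F/\mathbb{F}_p)$ of order built from the primes $q$ and $r$.
\end{itemize}
I will fix the orders of $F$ and $C$ so that the prime divisors of $|C|$ are exactly the ``wing'' vertices. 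By Zsigmondy's theorem these can be taken to be primitive prime divisors of $p^{m}-1$ for suitable exponents $m$ built from $q$, $r$, and $n$. This choice makes the action of $C$ on $V$ as close to fixed-point-free as needed.

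First I compute $\cd(G/V) = \cd(H)$. Since $C$ is abelian and normal in $H$ with $H/C \cong \Gamma$ abelian, Ito's theorem and Clifford theory give that every degree divides $|\Gamma|$. More precisely, the degrees are the orbit lengths of $\Gamma$ on $\Irr(C)$, because characters extend from the stabilizer: it is abelian over $C$ with cyclic quotient. I will compute these orbit lengths directly from the exponents, using the fact that a Frobenius power fixes the characters of the subgroup of $C$ of order dividing $p^{d}-1$.

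Second I handle the characters $\chi \in \Irr(G)$ that are nontrivial on $V$. Here I identify $\Irr(V)$ with $F$ via the trace form, so that $H$ acts on $\Irr(V)$ as it acts on $F$. For $0 \ne \lambda \in \Irr(V)$, the stabilizer $I_H(\lambda)$ meets $C$ trivially or in a controlled subgroup. Its image in $\Gamma$ is the set of field automorphisms fixing the corresponding element up to $C$. Since $V$ is abelian and $G$ splits over $V$, $\lambda$ extends to its stabilizer $V \rtimes I_H(\lambda)$. Gallagher's theorem together with Clifford correspondence then gives
\[
\chi(1) = |H : I_H(\lambda)| \cdot \theta(1), \qquad \theta \in \Irr(I_H(\lambda)).
\]
Running over the $H$-orbits on $F \setminus \{0\}$ yields the full list of degrees. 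The generic orbits give degrees divisible by all primes of $|C|$ together with the primes of $|\Gamma|$, which produce the two triangles, or cliques. The special orbits coming from subfields give the degrees containing the central vertex.

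The final step, which I expect to be the main obstacle, is verifying the \emph{non-edges}. No degree in $\cd(G)$ may be divisible by a prime from one wing together with a prime from the other wing, except through the shared vertex. This requires a careful count of which subfield elements have which stabilizers in $\Gamma$. It also requires showing that the orbit of every $\lambda$ with large $C$-orbit has a stabilizer in $\Gamma$ killing the opposite wing's prime. My first attempt will be to partition $F^{\times}$ by the smallest subfield containing an element of each $C$-coset, and to use the choice of primitive prime divisors to control $|C \cap \mathbb{F}_{p^{d}}^{\times}|$ for each subfield. Once the degree set is established, I will read off $\Delta(G)$ and compare it with Figure \ref{figpqr}. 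For $n=1$ this should reproduce the five-vertex bowtie, as a consistency check.
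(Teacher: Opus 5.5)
There is a genuine gap, and it lies in the step you postpone: for a group of the shape you propose, the required non-edges cannot be obtained. This already fails for the bowtie ($n=m=k=1$).

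Take $V=F^{+}$ with $|F|=p^{f}$, $C\le F^{\times}$ and $\Gamma\le\mathrm{Gal}(F/\mathbb{F}_p)$. Via the trace form, the $H$-action on $\Irr(V)\setminus\{1\}$ has the same orbits and stabilizers as the action on $F^{\times}$, on which $C$ is semiregular. So every $I_H(\lambda)$ meets $C$ trivially and is cyclic, and $\lambda$ extends to $V\rtimes I_H(\lambda)$. Hence $\cd(G\mid\lambda)=\{|C|\,|\Gamma:\bar I_\lambda|\}$, where $\bar I_\lambda$ is the stabilizer in $\Gamma$ of the corresponding coset in the cyclic group $F^{\times}/C$. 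Likewise $\cd(H)$ is the set of $\Gamma$-orbit lengths on $\Irr(C)$.

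Since $\Gamma$ acts on the cyclic groups $F^{\times}/C$ and $C$ by power maps, every orbit length divides the orbit length of a generator. Let $K$ and $K'$ be the kernels of $\Gamma$ on $F^{\times}/C$ and on $C$, and put $X=|C|\,|\Gamma:K|$ and $Y=|\Gamma:K'|$. Then every element of $\cd(G)$ divides $X$ or $Y$, and both $X,Y\in\cd(G)$. So $\Delta(G)$ is simply the union of the cliques on $\pi(X)$ and $\pi(Y)$.

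For the bowtie these cliques must be the two triangles, so there are primes $d\neq e$ in $\pi(Y)\setminus\pi(X)$. They divide $|\Gamma|$ but not $|\Gamma:K|$, so the element $\alpha\mapsto\alpha^{p^{f/(de)}}$ of order $de$ in $\Gamma$ lies in $K$. That is, $\alpha^{p^{f/(de)}-1}\in C$ for all $\alpha\in F^{\times}$, which forces $(p^{f}-1)/(p^{f/(de)}-1)$ to divide $|C|$.

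The shared vertex $c_0\in\pi(Y)$ also divides $f$. Let $d^{a}$ and $e^{b}$ be the $d$- and $e$-parts of $f$. Then the six divisors $d^{a},e^{b},d^{a}e^{b},c_0d^{a},c_0e^{b},c_0d^{a}e^{b}$ of $f$ do not divide $f/(de)$. Zsigmondy's theorem now gives at least four distinct primes dividing $|C|$, whereas $\pi(C)\subseteq\pi(X)$ has only three elements.

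Your fallback of one module per wing fails in the other direction. A character $\lambda_1\times\lambda_2$ with both factors nonprincipal has orbit length divisible by both individual orbit lengths, and this joins the two wings. Separately, the parameters are misread: the Galois group needed has order $2\mathbf{\hat{p}}$, and $\mathbf{\hat{q}},\mathbf{\hat{r}}$ are forced by $\mathbf{\hat{p}}$ through $2^{\mathbf{\hat{p}}}\mp 1$.

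The idea you are missing is that the normal Sylow $2$-subgroup must be \emph{nonabelian}. The paper works in $\mathbb{E}=\mathbb{F}_{2^{2\mathbf{\hat{p}}}}$ with $C=\mathbb{E}^{\times}$ of order $3\mathbf{\hat{q}}\mathbf{\hat{r}}$ and $|\mathcal{G}|=2\mathbf{\hat{p}}$. It replaces your $V$ by the Isaacs-type group $P\le 1+Rx$, where $R=\mathcal{R}/\mathcal{R}X^{3}$; this $P$ has order $2^{3\mathbf{\hat{p}}}$ and $Z(P)=P'$ of order $2^{\mathbf{\hat{p}}}$.

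The quotient $G/P'$ is a group of exactly your type. It contributes only $\cd(G/P')=\{d,3\mathbf{\hat{q}}\mathbf{\hat{r}}\mid d\text{ divides }2\mathbf{\hat{p}}\}$, whose graph is disconnected. The edges from $\mathbf{\hat{q}}$ to $2$ and to $\mathbf{\hat{p}}$ come instead from $\cd(G\mid P')$:
\begin{itemize}
\item $C_1$, of order $3\mathbf{\hat{r}}$, centralizes $P'$;
\item $C_2$, of order $\mathbf{\hat{q}}$, is transitive on $\Irr(P')\setminus\{1\}$;
\item each such $\lambda$ is fully ramified with respect to $P/P'$.
\end{itemize}
Gallagher's theorem combined with $\cd(C_1\rtimes\mathcal{G})$ then gives the degrees $2^{\mathbf{\hat{p}}}\mathbf{\hat{q}}$ and $2^{\mathbf{\hat{p}}+1}\mathbf{\hat{q}}\hat{d}$. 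This is what attaches $\mathbf{\hat{q}}$ to the wing $\{2\}\cup\mathbf{\hat{p}}$ without bringing $3$ and $\mathbf{\hat{r}}$ along.
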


\begin{figure}[htb]
    \centering
    $
\begin{tikzpicture}[scale=2]
\node (2) at (0,1) {$2$};
\node (p) at (0,0) {$\mathbf{\hat{p}}$};
\node (q) at (1,.5) {$\mathbf{\hat{q}}$};
\node (r) at (2,1) {$\mathbf{\hat{r}}$};
\node (3) at (2,0) {$3$};
\path[font=\small,>=angle 90]
(2) edge node [right] {$ $} (p)
(3) edge node [right] {$ $} (r)
(q) edge node [right] {$ $} (2)
(q) edge node [right] {$ $} (p)
(q) edge node [right] {$ $} (3)
(q) edge node [right] {$ $} (r);
\end{tikzpicture}
    $
    \caption{The generalized bowtie graph $\Delta(G)$}
    \label{figpqr}
\end{figure}
We note here that $\mathbf{\hat{p}}$, $\mathbf{\hat{q}}$, and $\mathbf{\hat{r}}$ need not represent single vertices, and will be discussed much more carefully in the following section.

Finally, we note that some of the work in this paper was completed by the fourth author as a Ph.D. candidate under the supervision of the second author at Kent State University. The contents of this paper may appear as part of the fourth author’s Ph.D. dissertation.

\section{Proof of the Main Theorem}\label{secMain}

Besides a sufficient background in character theory (see \cite{I}), we need only recall the landmark result from P\'alfy pertaining to disconnected graphs:

\begin{theorem}[P\'alfy's inequality from \cite{P2}]
Let $G$ be a solvable group and $\Delta(G)$ its prime character degree graph. Suppose that $\Delta(G)$ is disconnected with two components having size $a$ and $b$, where $a\leq b$. Then $b\geq2^a-1$.
\end{theorem}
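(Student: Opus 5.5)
Since this is Pálfy's inequality, I would follow the structural route for solvable groups with disconnected character degree graphs. First reduce to a minimal situation: pass to a quotient or normal section that still has a disconnected graph with the same two components. Then invoke the known structure (due to Pálfy, and refined by Lewis into six types). The relevant case is that $G$ has a normal elementary abelian $q$-subgroup $V$ of order $q^m$ on which a complement $H$ acts faithfully. In this case $H$ embeds in the semilinear group $\Gamma(q^m)$, so $H\le C\rtimes \langle\sigma\rangle$ with $C$ cyclic of order dividing $q^m-1$ and $\sigma$ a field automorphism. In the other structural types the two components are so constrained (for instance, one is a single vertex) that the inequality is immediate or follows directly. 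So the substantive case is the semilinear one.

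In the semilinear case, I would identify the two components. Orbit sizes of $H$ on $V$, and hence on $\Irr(V)$, yield degrees divisible by the primes dividing $|C|$-type indices. Characters of $H/(H\cap C)$, a cyclic subgroup of $\langle\sigma\rangle$ of order $e\mid m$, yield degrees divisible by the primes dividing $e$. Disconnectedness forces the primes dividing $e$, say $p_1,\dots,p_a$, to form the smaller component $\pi_1$. It also forces $C_H$-orbits to have a special shape: the primes dividing $|C:\text{stabilizer}|$ form the other component $\pi_2$, and no irreducible character of $G$ has degree divisible by both a $\pi_1$-prime and a $\pi_2$-prime. The key consequence I want is that $C$ must contain the elements of order $q^{m/d}$-Zsigmondy type for each divisor $d$ of $p_1\cdots p_a$. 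More precisely, $\sigma^{e/d}$ acting on $C$ must act fixed-point-freely on the Zsigmondy part of $C$; otherwise some character would have degree divisible by both a $p_i$ and a prime in $\pi_2$, creating a forbidden edge.

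The counting step is then the following. For each of the $2^a-1$ divisors $d>1$ of $p_1\cdots p_a$, choose a Zsigmondy prime $z_d$ for $q^{m'd}-1$, where $m'$ is chosen so that these numbers are distinct. Distinct exponents give distinct Zsigmondy primes. I would then show that each $z_d$ divides $|C|$ and lies in $\pi_2$, which gives $b\ge 2^a-1$. To prove that $z_d$ divides $|C|$, I would argue that if $C$ missed the $z_d$-part, then the Galois element of order $d$ would fix a nontrivial element of $V$ or of $C$. That would produce a character of degree divisible by some $p_i\mid d$ together with a $\pi_2$-prime, contradicting disconnectedness.

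I expect the main obstacle to be this step: showing rigorously that each Zsigmondy prime actually divides $|C|$ and is a vertex in the large component. A secondary obstacle is handling the Zsigmondy exceptions $(q^{k},k)=(2,6)$ and $k=2$ with $q^{k/2}+1$ a power of $2$. I would treat these separately, using that $2$ and $3$ then appear directly in $\pi_2$ or that $a$ is forced to be small enough that the bound can be checked by hand.
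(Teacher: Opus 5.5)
The paper does not prove this statement; it quotes it from P\'alfy. So your proposal has to stand on its own. Its outline is right: reduce to $H\le\Gamma(q^m)$ acting on $V$, then exhibit one Zsigmondy prime in $\pi(|C|)$ for each of the $2^a-1$ divisors $d>1$ of $p_1\cdots p_a$.

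\textbf{The gap.} The step you flag as the main obstacle is not just unproved; the mechanism you propose runs in the wrong direction.
\begin{enumerate}
\item If an element of $H$ fixes a nonzero $v\in V$, then $I_H(v)$ grows and the orbit size $|H:I_H(v)|$ loses prime factors. So fixed points on $V$ can never create a degree divisible by both some $p_i$ and a $\pi_2$-prime.
\item Fixed points of a Galois element on $C$ (or on $\Irr(C)$) only affect $\cd(H)$, whose members divide $e$. So they cannot create such an edge either.
\item Your ``more precisely'' claim, that $\sigma^{e/d}$ acts fixed-point-freely on the Zsigmondy part of $C$, holds automatically by field theory. It says nothing about whether that part is nontrivial, i.e.\ whether $z_d$ divides $|C|$.
\item Relatedly, the exponent $m'$ is not something you get to choose.
\end{enumerate}
A smaller slip: characters of the cyclic group $H/(H\cap C)$ are linear. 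The degrees involving primes of $e$ come from $H$-orbits on $\Irr(C)$ and on $V$.

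\textbf{The missing idea.} Disconnectedness forces \emph{many} fixed points, hence a large $C$. Work in the reduced case $G=V\rtimes H$ and identify $\Irr(V)$ with $V$.
\begin{enumerate}
\item Since $C\le\mathbb{F}_{q^m}^\times$ acts semiregularly on $V\setminus\{0\}$, $|C|$ divides every orbit size $|H:I_H(v)|$, and these are character degrees. Here $C\neq1$, since otherwise $H$ is cyclic, has a regular orbit on $V$, and $\Delta(G)$ is complete.
\item Hence $\pi(|C|)$ lies in one component $\pi_2$. The other component $\pi_1=\{p_1,\dots,p_a\}$ consists of primes dividing $e=|H:C|$, not necessarily all of them. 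No $p_i$ may divide any orbit size.
\item Since $H/C$ is cyclic, let $gC$ generate its Hall $\pi_1$-subgroup. Write $g=c_0\tau$ with $c_0\in C$ and $\tau$ a field automorphism of order $e_1$, the $\pi_1$-part of $e$. Then $g$ stabilizes every $C$-orbit, i.e.\ $v^{\tau}v^{-1}\in C$ for all $v\in\mathbb{F}_{q^m}^\times$.
\item The map $v\mapsto v^{\tau}v^{-1}$ has kernel $\mathbb{F}_Q^\times$ with $Q=q^{m/e_1}$. So $C$ contains a subgroup of order $(Q^{e_1}-1)/(Q-1)$.
\item For $1<d\mid p_1\cdots p_a$, a primitive prime divisor $z_d$ of $Q^d-1$ divides this number but not $Q-1$, so $z_d\in\pi_2$. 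Distinct $d$ give distinct $z_d$, since $d$ is the order of $Q$ modulo $z_d$.
\end{enumerate}
This gives $b\ge 2^a-1$. It also shows that $\pi_1$ really is the smaller component, which you asserted without justification.

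The Zsigmondy exceptions need no separate case analysis. The case $(Q,d)=(2,6)$ would put $3$ into both $\pi_1$ and $\pi(|C|)$. The case $d=2$ with $Q+1$ a power of $2$ would put $2$ into both.

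Without this argument, your proposal does not prove the inequality even in the semilinear case. Separately, the reduction to that case, and the claim that the remaining structural types are immediate, are asserted rather than argued. They would also need to be supplied or cited precisely.
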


We now present a generalization of the construction of the finite solvable group $G$ whose prime character degree graph $\Delta(G)$ is the bowtie graph that Lewis presents in Example 7.1 on page 263 of \cite{L5}. These groups are similar to those constructed in \cite{I2} (see also \cite{DHS}), and we will attempt to follow a similar notational convention.

We start by letting $n\in\mathbb{N}$. We then set $\mathbf{\hat{p}}$ to be a product of $n$ distinct prime numbers, none of which are $2$ and $3$, that satisfy certain conditions. Explicitly, we have that
\begin{equation}\label{phat}
\mathbf{\hat{p}}=p_1p_2\cdots p_n,
\end{equation}
that satisfies the equations
\begin{equation}\label{qrhat}
\mathbf{\hat{q}}=2^{\mathbf{\hat{p}}}-1=q_1q_2\cdots q_m\text{~and~}\mathbf{\hat{r}}=\frac{2^{\mathbf{\hat{p}}}+1}{3}=r_1r_2\cdots r_k,
\end{equation}
where we require that the $q_i$ (for all $1\leq i\leq m$) and the $r_j$ (for all $1\leq j\leq k$) need to be distinct prime numbers that are all different from both $2$ and $3$, as well as from all the primes that make up $\mathbf{\hat{p}}$. Having these all be distinct prime numbers will guarantee that the resulting prime character degree graph will have $n+m+k+2$ vertices. Finally, observe that
\begin{equation}\label{factoring}
2^{2\mathbf{\hat{p}}}-1=(2^{\mathbf{\hat{p}}}-1)(2^{\mathbf{\hat{p}}}+1)=3\mathbf{\hat{q}}\mathbf{\hat{r}}=3q_1q_2\cdots q_mr_1r_2\cdots r_k,
\end{equation}
which will be useful later upon converting the set of character degrees to the corresponding prime character degree graph.

Doing a few computations shows that many sets of primes that make up $\mathbf{\hat{p}}$ from \eqref{phat} seem to satisfy \eqref{qrhat}, and in Section \ref{secEx}, we will present several examples. However, one can find sets of primes that do not satisfy \eqref{qrhat}. It would be an interesting question in Number Theory to ask if there are infinitely many such sets of primes.

\begin{main}
There exists a finite solvable group $G$ such that its prime character degree graph $\Delta(G)$ is the graph seen in Figure \ref{figpqr}, where $\mathbf{\hat{p}}$, $\mathbf{\hat{q}}$, and $\mathbf{\hat{r}}$ are all defined as above.
\end{main}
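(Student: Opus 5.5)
The plan is to realize the graph with an explicit semidirect product, modelled on Example 7.1 of \cite{L5} and the groups of \cite{I2}, and then to compute $\cd(G)$ with Clifford theory, one normal layer at a time. Let $F$ be the field of order $2^{2\mathbf{\hat{p}}}$ and $K\subseteq F$ its subfield of order $2^{\mathbf{\hat{p}}}$.

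\textbf{Step 1: the construction.} Take $P$ to be the Suzuki-type $2$-group on the set $F\times K$, with multiplication
\[
(a,b)(c,d)=(a+c,\; b+d+\mathrm{Tr}_{F/K}(a c^{2^{\mathbf{\hat{p}}}})).
\]
Then $Z=Z(P)=P'\cong K$ and $P/Z\cong F$. Let $\lambda\in F^{*}$ act on $P$ by $(a,b)\mapsto(\lambda a,\lambda^{1+2^{\mathbf{\hat{p}}}}b)$, and let $\Gamma=\mathrm{Gal}(F/\mathbb{F}_2)$, which is cyclic of order $2\mathbf{\hat{p}}$, act coordinatewise. Set $H=F^{*}\rtimes\Gamma$ and $G=P\rtimes H$, possibly after passing to a suitable subgroup of $H$ that is still transitive where needed. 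By \eqref{factoring}, $|F^{*}|=3\mathbf{\hat{q}}\mathbf{\hat{r}}$. The kernel of $F^{*}$ on $Z$ is the norm-one subgroup $N$, of order $2^{\mathbf{\hat{p}}}+1=3\mathbf{\hat{r}}$. The image of $F^{*}$ on $Z$ is the norm group of order $\mathbf{\hat{q}}$, and it acts regularly on $Z\setminus\{0\}$.

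\textbf{Step 2: characters of $G/P\cong H$.} Characters of $F^{*}$ have $\Gamma$-orbits whose sizes divide $2\mathbf{\hat{p}}$. By Gallagher's theorem together with the fact that $\Gamma$ is cyclic, the degrees of $H$ all divide $2\mathbf{\hat{p}}$. To see that $2\mathbf{\hat{p}}\in\cd(H)$, I will show that some character of $F^{*}$ has trivial stabilizer in $\Gamma$; this amounts to a count of the fixed points of the Frobenius powers.

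\textbf{Step 3: characters of $G/Z$ lying over nontrivial $\theta\in\Irr(P/Z)$.} Since $F^{*}$ is regular on $\Irr(P/Z)\setminus\{1\}$, the stabilizer of $\theta$ in $H$ is a complement isomorphic to $\Gamma$, which is abelian. The characters $\theta$ extend because $P/Z$ is abelian and $\Gamma$ is cyclic. Hence these degrees are $3\mathbf{\hat{q}}\mathbf{\hat{r}}$ times divisors of $2\mathbf{\hat{p}}$. This is where the first obstacle appears: the resulting degrees would create the edges $3$--$2$ and $\mathbf{\hat{r}}$--$\mathbf{\hat{p}}$. So the construction must be arranged, probably by choosing the action of $\Gamma$ on $P/Z$ appropriately or by taking only the order-$2$ part of $\Gamma$ to act on $P/Z$ as in \cite{L5}, so that the stabilizer acts trivially on the extensions and the only degree in this layer is exactly $3\mathbf{\hat{q}}\mathbf{\hat{r}}$. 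I will first try to show that every character of $\Gamma$ restricts, via Gallagher, to degrees coprime to $6\mathbf{\hat{r}}$ only in a way that yields the factor $1$. This step is where I expect the real difficulty; I would first check the $n=1$ case with $\mathbf{\hat{p}}=5$ against Example 7.1 of \cite{L5}.

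\textbf{Step 4: characters over nontrivial $\lambda\in\Irr(Z)$.} Each such $\lambda$ is fully ramified in $P$, lying under a unique $\chi_\lambda$ of degree $2^{\mathbf{\hat{p}}}$. The $F^{*}$-orbit of $\lambda$ has size $\mathbf{\hat{q}}$, with stabilizer $N\rtimes\Gamma$. Since $\gcd(|N|,2)=1$, Glauberman's correspondence and the cyclicity of $\Gamma$ should give an extension of $\chi_\lambda$ to the stabilizer. The degrees in this layer are then $2^{\mathbf{\hat{p}}}\mathbf{\hat{q}}\cdot d$ with $d\in\cd(N\rtimes\Gamma)$. To avoid edges between $2$ and $3$ or between $2$ and $\mathbf{\hat{r}}$, I need $\Gamma$ to act fixed-point-freely enough on $N$ that $d$ ranges exactly over divisors of $2\mathbf{\hat{p}}$, coprime to $3\mathbf{\hat{r}}$, with $d=\mathbf{\hat{p}}$ occurring. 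Here the hypothesis from \eqref{qrhat} that all the $q_i,r_j$ are distinct from $2$, $3$ and the $p_i$ is what keeps the vertex sets separate.

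\textbf{Step 5: reading off the graph.} The resulting degree set contains $3\mathbf{\hat{q}}\mathbf{\hat{r}}$, $2^{\mathbf{\hat{p}}}\mathbf{\hat{q}}\mathbf{\hat{p}}$, and divisors of these. Its graph is therefore two cliques, on $\{3\}\cup\mathbf{\hat{q}}\cup\mathbf{\hat{r}}$ and on $\{2\}\cup\mathbf{\hat{p}}\cup\mathbf{\hat{q}}$, glued along $\mathbf{\hat{q}}$. This is exactly Figure~\ref{figpqr}. P\'alfy's inequality can serve as a sanity check on the subgraph obtained by deleting $\mathbf{\hat{q}}$.
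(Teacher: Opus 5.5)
Your overall route is the paper's. Once corrected (see below), the group you intend in Step 1 is exactly the paper's $P=[Q,C_1]Q'$ with $\mathbb{E}^{*}\rtimes\mathcal{G}$ acting, and you compute degrees in the same three layers $G/P$, $\Irr(P/P')$, $\Irr(P')$. The genuine gap is Step 3, which you leave unresolved because of a computational error. For $1\neq\theta\in\Irr(P/Z)$ the stabilizer is $I=P\rtimes\Gamma_0$ with $\Gamma_0\cong\Gamma$ cyclic, and $\theta$ extends to $I$. Gallagher then gives $\Irr(I\mid\theta)=\{\hat\theta\beta:\beta\in\Irr(I/P)\}$, and every $\beta$ is \emph{linear} because $I/P$ is abelian. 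Hence every character of $G$ over $\theta$ has degree $|G:I|=|F^{*}|=3\mathbf{\hat{q}}\mathbf{\hat{r}}$, so $\cd(G\mid\theta)=\{3\mathbf{\hat{q}}\mathbf{\hat{r}}\}$; this is precisely the paper's computation of $\cd(G\mid\delta)$. You conflated divisors of $|\Gamma|$ with character degrees of $\Gamma$. The edges $3$--$2$ and $\mathbf{\hat{r}}$--$\mathbf{\hat{p}}$ never arise, and no rearrangement or passage to a subgroup of $H$ is needed. The repair you float, letting only the order-$2$ part of $\Gamma$ act, would in fact be fatal. The primes $p_i$ divide $|G|$ only through $|\Gamma|=2\mathbf{\hat{p}}$, so they would vanish from $\Delta(G)$, and Step 4 could no longer produce a degree divisible by $\mathbf{\hat{p}}$.

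Second, the multiplication in Step 1 defines an abelian group. Indeed $\mathrm{Tr}_{F/K}(ac^{\sigma})=ac^{\sigma}+a^{\sigma}c$ is symmetric in $a,c$, and $\mathrm{Tr}_{F/K}(aa^{\sigma})=0$. So your $P$ is elementary abelian of order $2^{3\mathbf{\hat{p}}}$, $P'=1$, and the full ramification in Step 4 fails. You need a non-symmetric cocycle whose commutator form is the trace. For instance, take $P=\{(a,b)\in F\times F: b+b^{\sigma}=a^{\sigma+1}\}$ with $(a,b)(c,d)=(a+c,\,b+d+ac^{\sigma})$; this is the paper's $[Q,C_1]Q'$ inside $Q=1+Rx$. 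Then commutators are $(0,\mathrm{Tr}_{F/K}(ac^{\sigma}))$, $Z(P)=P'=\{(0,b):b\in K\}$, and $\lambda\in F^{*}$ acts by $(a,b)\mapsto(\lambda a,\lambda^{\sigma+1}b)$.

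Finally, Step 4 is less delicate than you fear. $N$ is abelian normal of index $2\mathbf{\hat{p}}$ in $N\rtimes\Gamma$, so every $d$ divides $2\mathbf{\hat{p}}$ automatically. $\chi_\lambda$ extends to its stabilizer because $|N\rtimes\Gamma|=6\mathbf{\hat{p}}\mathbf{\hat{r}}$ is squarefree, so all Sylow subgroups of the stabilizer modulo $P$ are cyclic; this is the paper's argument. What actually must be shown is that $2\mathbf{\hat{p}}\in\cd(N\rtimes\Gamma)$, not $d=\mathbf{\hat{p}}$: since $\sigma$ inverts $N$, nontrivial characters of $N$ have odd-order stabilizers in $\Gamma$, so every $d>1$ is even. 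A fixed-point count like the one you plan in Step 2 then supplies a character of $N$ with trivial stabilizer.
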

\begin{proof}
We begin with a hefty amount of set-up. So, with the above in place, we start by taking $\mathbb{F}$ to be the field of order $2^{\mathbf{\hat{p}}}$. Consequently, we then denote $\mathbb{E}$ to be the field extension of $\mathbb{F}$ whose size is $2^{2\mathbf{\hat{p}}}$. As in \cite{L5}, we note that $\mathbb{E}$ has the field automorphism $\sigma$ given by $\sigma(\alpha)=\alpha^{2^{\mathbf{\hat{p}}}}$ for all $\alpha\in\mathbb{E}$, and furthermore we know $\mathbb{F}$ is fixed under $\sigma$ and that $\sigma^2=1$.

Letting $\mathcal{R}$ be the skew polynomial ring with coefficients in $\mathbb{E}$ and the indeterminate $X$, we then define $X\alpha=\alpha^\sigma X$ as the operation of multiplication between an arbitrary element of the extension field $\mathbb{E}$ and the indeterminate $X$. Notice that $\mathcal{R}X^3=X^3\mathcal{R}$, and so $\mathcal{R}X^3$ is an ideal of $\mathcal{R}$. We can then define $R$ to be the corresponding quotient ring: $R=\mathcal{R}/\mathcal{R}X^3$. Moreover, we define $x$ to be the image of the indeterminate $X$ in $R$, and we see that $J(R)=Rx$, where $J(R)$ denotes the Jacobson radical of $R$. Next, define $Q=1+Rx=\{1+\alpha x+\beta x^2~|~\alpha,\beta\in\mathbb{E}\}$, and it is easy to verify that $Q$ is a group of order $2^{4\mathbf{\hat{p}}}$. One can then show that the commutator
$$
[1+\alpha_1x+\beta_1x^2,1+\alpha_2x+\beta_2x^2]=1+(\alpha_1\alpha_2^\sigma+\alpha_1^\sigma\alpha_2)x^2.
$$
As a result, one gleans $Q'=\{1+\delta x^2~|~\delta\in\mathbb{F}\}$.

Next, denote $C$ to be the multiplicative group of $\mathbb{E}$. This immediately implies that $C$ is cyclic, and that since $|\mathbb{E}|=2^{2\mathbf{\hat{p}}}$, then $|C|=2^{2\mathbf{\hat{p}}}-1$. Following \eqref{factoring}, one can instead write $|C|=3\mathbf{\hat{q}}\mathbf{\hat{r}}$. Taking $\mathcal{G}$ to be the Galois group of $\mathbb{E}$ over its base field, we can conclude that $\mathcal{G}$ is cyclic and that $|\mathcal{G}|=2\mathbf{\hat{p}}$. Beyond $\mathcal{G}$ acting on $C$ in the natural way, one can also define $C$ acting on $Q$ via $(1+\alpha x+\beta x^2)^c=1+\alpha cx+\beta c^{2^{\mathbf{\hat{p}}}+1}x^2$ for all $c\in C$ and $\mathcal{G}$ acting on $R$ by $(1+\alpha x+\beta x^2)^g=1+\alpha^gx+\beta^gx^2$ for all $g\in\mathcal{G}$. As some of the final steps of bookkeeping, we define $C_1$ such that $C_1\leq C$ and $|C_1|=2^{\mathbf{\hat{p}}}+1=3\mathbf{\hat{r}}$ along with $C_2$ such that $C_2\leq C$ with $|C_2|=2^{\mathbf{\hat{p}}}-1=\mathbf{\hat{q}}$. We note that $C_2$ acts Frobeniusly on $Q$. Moreover, by denoting $Z=C_Q(C_1)$, we see that $Z=\{1+\gamma x^2~|~\gamma\in\mathbb{E}\}$. We can then apply Fitting's lemma to get that $Q/Q'=[Q,C_1]Q'/Q'\times Z/Q'$. For ease of notation, set $P=[Q,C_1]Q'$. Observe that
$$
|Q:P|=|Q:[Q,C_1]Q'|=|Z:Q'|=2^{\mathbf{\hat{p}}}.
$$
By above, since we know $|Q|=2^{4\mathbf{\hat{p}}}$, and since $|Q:P|=2^{\mathbf{\hat{p}}}$, then we can conclude that $|P|=2^{3\mathbf{\hat{p}}}$. Finally, one can show that $Z(P)=P'=Q'$ and it is not difficult to see that both $C$ and $\mathcal{G}$ normalize $P$.

We are now ready to define the finite solvable group $G$. Set $G=P\rtimes C\rtimes\mathcal{G}$. Our goal now is to compute the set of character degrees for $G$.

As a first step, it is not difficult to show that $\cd(C\rtimes\mathcal{G})=\{d~|~d\in\mathbb{N}\text{~and~}d\text{~divides~}2\mathbf{\hat{p}}\}$. Furthermore, $C$ acts transitively on the nonprincipal characters in $\Irr(P/P')$, and so if $\delta\in\Irr(P/P')$ such that $\delta$ is nonprincipal, then one can show that $\delta$ lies in an orbit of size $2^{2\mathbf{\hat{p}}}-1$. Supposing that $\Stab_G(\delta)=P\rtimes\mathcal{G}$, since $\mathcal{G}$ is cyclic, then we know that the irreducible characters $\Irr(P\rtimes\mathcal{G}|\delta)$ have the extensions of $\delta$. Therefore, $\cd(G|\delta)=\{2^{2\mathbf{\hat{p}}}-1\}$, and again by \eqref{factoring}, we can instead write $\cd(G|\delta)=\{3\mathbf{\hat{q}}\mathbf{\hat{r}}\}$. Hence,
\begin{equation}\label{first}
\cd(G/P')=\{d,3\mathbf{\hat{q}}\mathbf{\hat{r}}~|~d\in\mathbb{N}\text{~and~}d\text{~divides~}2\mathbf{\hat{p}}\}.
\end{equation}
 
Recall that $C_2$ acts Frobeniusly on $Q$, and therefore also on $Q'=P'=Z(P)$ and $Q'\le C_Q(C_1)$. It follows that $P\rtimes C_1$ centralizes $P'$. As $P'=Z(P)$, we see that $P'$ is abelian and therefore the action of $C_2$ on $P'$ is permutation isomorphic to the action of $C_2$ on the elements of $\Irr(P')$. Since $|C_2|=2^{\hat{p}}-1=|\Irr(P')|-1$, it follows that $C_2$ acts transitively on the nonprincipal characters of $\Irr(P')$, and so $\lambda$ lies in an orbit of size $2^{\mathbf{\hat{p}}}-1=\mathbf{\hat{q}}$. Because $C_2$ acts transitively on $\Irr(P')\setminus\{1\}$, the stabilizers of the characters in $\Irr(P')\setminus\{1\}$ in $C_2\rtimes\mathcal{G}$ correspond to the conjugates of $\mathcal{G}$. Thus, without loss of generality, we can choose $1\ne\lambda\in\Irr(P')$ so that the stabilizer of $\lambda$ in $C_2\rtimes\mathcal{G}$ is $\mathcal{G}$. Thus, the stabilizer of $\lambda$ in $G$ is $P\rtimes C_1\rtimes\mathcal{G}$. Observe that $P/P'$ is irreducible under the action of $C_1$, which implies that $P/P'$ is a chief factor in $P\rtimes C_1\rtimes\mathcal{G}$. This implies that $P/P'$ in $P\rtimes C_1\rtimes\mathcal{G}$ and $\lambda\in\Irr(P')$ satisfy the hypotheses of Problem 6.12 in \cite{I}. By the conclusions of that problem, we see that either $\lambda$ induces irreducibly to $P$, $\lambda$ is fully-ramified with respect to $P/P'$, or $\lambda$ extends to $P$. Since $\lambda$ is invariant in $P$, it does not induce to $P$. Next, suppose $\lambda$ extends to $\tilde\lambda$. Then $\tilde\lambda$ would be linear, and since $(\tilde\lambda)_{P'}=\lambda\ne1$, we have a contradiction. Thus, we are forced to conclude that $\lambda$ must be fully-ramified with respect to $P/P'=P/Z(P)$.

Denoting $\hat{\lambda}$ as the irreducible constituent of $\lambda^P$, we get that $\Stab_G(\hat{\lambda})=\Stab_G(\lambda)=P\rtimes C_1\rtimes\mathcal{G}$. Since we know all the Sylow subgroups of $C_1\rtimes\mathcal{G}$ are cyclic, we have that $\hat{\lambda}$ extends to $P\rtimes C_1\rtimes\mathcal{G}$. We now consider the set $\cd(C_1\rtimes\mathcal{G})$. Let $\mathbf{p^*}$ be a positive (not necessarily proper) divisor of $\mathbf{\hat{p}}$. It follows that $S=\langle\sigma^{\mathbf{p^*}}\rangle$ is a subgroup of $\mathcal{G}$ having even order $2\mathbf{\hat{p}}/\mathbf{p^*}$. Note that $\alpha$ is in the fixed field of $S$ exactly when
$$
\sigma^{\mathbf{p^*}}(\alpha)=\alpha\iff\alpha^{2^{\mathbf{p^*}}}=\alpha\iff\alpha^{2^{\mathbf{p^*}}-1}=1\iff o(\alpha)\mid(2^{\mathbf{p^*}}-1).
$$
Noting that $(2^{\mathbf{p^*}}-1)\mid(2^{\mathbf{\hat{p}}}-1)$ and that $(2^{\mathbf{\hat{p}}}+1,2^{\mathbf{\hat{p}}}-1)=(3\mathbf{\hat{r}},\mathbf{\hat{q}})=1$, it follows that $\alpha\in C_2$ and we have that $I_{\mathcal{G}}(\lambda)\neq S$ for every $\lambda\in\Irr(C_1)$. Thus, either $2\mid |\mathcal{G}:I_{\mathcal{G}}(\lambda)|=\lambda^{C_1\rtimes\mathcal{G}}(1)$ or $\lambda^{C_1\rtimes\mathcal{G}}(1)=|\mathcal{G}:I_{\mathcal{G}}(\lambda)|=1$, and therefore $\cd(C_1\rtimes\mathcal{G})=\{1,2\hat{d}~|~\hat{d}\in\mathbb{N}\text{~and~}\hat{d}\text{~divides~}\mathbf{\hat{p}}\}$. By Gallagher's theorem, we then get that $\cd(P\rtimes C_1\rtimes\mathcal{G}|\hat{\lambda})=\cd(P\rtimes C_1\rtimes\mathcal{G}|\lambda)=\{2^{\mathbf{\hat{p}}},2^{\mathbf{\hat{p}}+1}\hat{d}~|~\hat{d}\in\mathbb{N}\text{~and~}\hat{d}\text{~divides~}\mathbf{\hat{p}}\}$. Hence, $\cd(G|\lambda)=\{2^{\mathbf{\hat{p}}}\mathbf{\hat{q}},2^{\mathbf{\hat{p}}+1}\mathbf{\hat{q}}\hat{d}~|~\hat{d}\in\mathbb{N}\text{~and~}\hat{d}\text{~divides~}\mathbf{\hat{p}}\}$. However, since all the nonprincipal characters of $\Irr(P')$ are conjugate, then in particular we have that
\begin{equation}\label{second}
\cd(G|P')=\{2^{\mathbf{\hat{p}}}\mathbf{\hat{q}},2^{\mathbf{\hat{p}}+1}\mathbf{\hat{q}}\hat{d}~|~\hat{d}\in\mathbb{N}\text{~and~}\hat{d}\text{~divides~}\mathbf{\hat{p}}\}.
\end{equation}

Since we know $\cd(G)=\cd(G/P')\cup\cd(G|P')$, then by combining \eqref{first} and \eqref{second}, we get that
\begin{equation}\label{degrees}
\cd(G)=\{d,3\mathbf{\hat{q}}\mathbf{\hat{r}},2^{\mathbf{\hat{p}}}\mathbf{\hat{q}},2^{\mathbf{\hat{p}}+1}\mathbf{\hat{q}}\hat{d}~|~d,\hat{d}\in\mathbb{N},~d\text{~divides~}2\mathbf{\hat{p}},\text{~and~}\hat{d}\text{~divides~}\mathbf{\hat{p}}\}.
\end{equation}

Noting again that all the primes are distinct, and also by condensing vertices because they form complete graphs (as done in \cite{D}), we can then draw the corresponding prime character degree graph $\Delta(G)$ as seen in Figure \ref{figpqr}.
\end{proof}

\section{Examples}\label{secEx}

Here we provide specific examples of primes to show the use of the construction seen in Section \ref{secMain}.

\begin{example}\label{exn1m1}
Consider $n=1$, $m=1$, and $k=1$. Thus we have $\mathbf{\hat{p}}=p_1$, and this then yields $\mathbf{\hat{q}}=2^{p_1}-1=q_1$ and $\mathbf{\hat{r}}=\frac{2^{p_1}+1}{3}=r_1$. Following \eqref{degrees}, we get
$$
\cd(G_1)=\{1,2,p_1,2p_1,3q_1r_1,2^{p_1}q_1,2^{p_1+1}q_1,2^{p_1+1}p_1q_1\}.
$$
This reduces to the construction that Lewis performs in \cite{L5}, which yields the bowtie graph with five vertices shown in Figure \ref{fign1}. Explicitly, taking $p_1=5$ results in $q_1=31$ and $r_1=11$, which is the example of the prime numbers that Lewis provides.
\begin{figure}[htb]
    \centering
    $
\begin{tikzpicture}[scale=2]
\node (2) at (0,1) {$2$};
\node (p) at (0,0) {$p_1$};
\node (q) at (1,.5) {$q_1$};
\node (r) at (2,1) {$r_1$};
\node (3) at (2,0) {$3$};
\path[font=\small,>=angle 90]
(2) edge node [right] {$ $} (p)
(3) edge node [right] {$ $} (r)
(q) edge node [right] {$ $} (2)
(q) edge node [right] {$ $} (p)
(q) edge node [right] {$ $} (3)
(q) edge node [right] {$ $} (r);
\end{tikzpicture}
    $
    \caption{The graph $\Delta(G_1)$ corresponding to $n=1$, $m=1$, and $k=1$}
    \label{fign1}
\end{figure}
\end{example}

We note that it is easy to find more examples for the case of $n=1$, $m=1$, and $k=1$ as above. For instance, one can consider $p_1=7$ (forcing $q_1=127$ and $r_1=43$), or even $p_1=13$ (giving $q_1=8191$ and $r_1=2731$) or $p_1=17$ (with $q_1=131071$ and $r_1=43691$), among others.

\begin{remark}\label{remDP}
Based on $n$, the Zsigmondy prime theorem determines that $m\geq2^n-1$. It turns out that it is advantageous to use the smallest $m$ possible. In doing so, one can always take a direct product with the singleton $K_1$ to increase the number of complete vertices in the middle (the $q$'s). The following example showcases this idea.
\end{remark}

\begin{example}\label{exn1m2}
Consider $n=1$, but $m=2$ and $k=1$. We have $\mathbf{\hat{p}}=p_1$, which then gives $\mathbf{\hat{q}}=2^{p_1}-1=q_1q_2$ and $\mathbf{\hat{r}}=\frac{2^{p_1}+1}{3}=r_1$. Again referencing \eqref{degrees}, we get the set of character degrees as
$$
\cd(G_2)=\{1,2,p_1,2p_1,3q_1q_2r_1,2^{p_1}q_1q_2,2^{p_1+1}q_1q_2,2^{p_1+1}p_1q_1q_2\},
$$
and the corresponding prime character degree graph can be seen in Figure \ref{fign1m2}. This example can be realized by taking $p_1=11$, which then yields $q_1=23$, $q_2=89$, and $r_1=683$.
\begin{figure}[htb]
    \centering
    $
\begin{tikzpicture}[scale=2]
\node (2) at (0,1) {$2$};
\node (p) at (0,0) {$p_1$};
\node (q1) at (1,.675) {$q_1$};
\node (q2) at (1,.325) {$q_2$};
\node (r) at (2,1) {$r_1$};
\node (3) at (2,0) {$3$};
\path[font=\small,>=angle 90]
(2) edge node [right] {$ $} (p)
(3) edge node [right] {$ $} (r)
(q1) edge node [right] {$ $} (q2)
(q1) edge node [right] {$ $} (2)
(q1) edge node [right] {$ $} (p)
(q1) edge node [right] {$ $} (3)
(q1) edge node [right] {$ $} (r)
(q2) edge node [right] {$ $} (2)
(q2) edge node [right] {$ $} (p)
(q2) edge node [right] {$ $} (3)
(q2) edge node [right] {$ $} (r);
\end{tikzpicture}
    $
    \caption{The graph $\Delta(G_2)$ corresponding to $n=1$, $m=2$, and $k=1$}
    \label{fign1m2}
\end{figure}
\end{example}

\begin{example}
Following Remark \ref{remDP}, notice how the graph in Figure \ref{fign1m2} can be realized as a direct product. Explicitly, letting $A$ be a finite solvable group whose prime character degree graph $\Delta(A)$ is the singleton $K_1$, we can consider the group $G_1\times A$, where $G_1$ is from Example \ref{exn1m1}. We then see that $\Delta(G_2)$ in Figure \ref{fign1m2} is identical to the graph $\Delta(G_1\times A)$, which is shown in Figure \ref{figG1A}. For reference, we have color-coded the graph so that the blue tracks $\Delta(G_1)$, the red corresponds to $\Delta(A)$, and the black follows the new edges created via the direct product. As is typical in these scenarios, we can guarantee that all primes are distinct and therefore will end up with six total vertices here. The whole graph, of course, is $\Delta(G_1\times A)$.
\begin{figure}[htb]
    \centering
    $
\begin{tikzpicture}[scale=2]
\node (2) at (0,1) {\textcolor{blue}{$2$}};
\node (p) at (0,0) {\textcolor{blue}{$p_1$}};
\node (q1) at (1,.675) {\textcolor{blue}{$q_1$}};
\node (q2) at (1,.325) {\textcolor{red}{$\bullet$}};
\node (r) at (2,1) {\textcolor{blue}{$r_1$}};
\node (3) at (2,0) {\textcolor{blue}{$3$}};
\path[font=\small,>=angle 90,blue,thick]
(2) edge node [right] {$ $} (p)
(3) edge node [right] {$ $} (r)
(q1) edge node [right] {$ $} (2)
(q1) edge node [right] {$ $} (p)
(q1) edge node [right] {$ $} (3)
(q1) edge node [right] {$ $} (r);
\path[font=\small,>=angle 90,thick]
(q2) edge node [right] {$ $} (q1)
(q2) edge node [right] {$ $} (2)
(q2) edge node [right] {$ $} (p)
(q2) edge node [right] {$ $} (3)
(q2) edge node [right] {$ $} (r);
\end{tikzpicture}
    $
    \caption{The graph $\Delta(G_1\times A)$}
    \label{figG1A}
\end{figure}
\end{example}

\begin{remark}\label{remPI}
Based on $n$ and $m$, the Zsigmondy prime theorem also gives bounds on $k$, which is $2^{n+1}-m-2\leq k\leq2^{n+1}-3$. The upper bound, in particular, is tied directly to P\'alfy's inequality, thereby ensuring our construction is not redundant. If, for instance, we have $k>2^{n+1}-3$, then we in fact have $k\geq2^{n+1}-2$. In this scenario, the resulting graph is nothing more than a direct product between $K_m$ (the complete graph on $m$ vertices, coming from the primes $q_1,q_2,\ldots,q_m$) and the disconnected graph where one component is $K_{n+1}$ (with primes $2,p_1,p_2\ldots,p_n$)  and the other is $K_{k+1}$ (with the primes $3,r_1,r_2,\ldots,r_k$). Under the aforementioned assumption of $k\geq2^{n+1}-2$, we see that this construction induces $a=n+1$ and $b=k+1$, resulting in
$$
b=k+1\geq2^{n+1}-2+1=2^{n+1}-1=2^a-1,
$$
which, again, is permitted by P\'alfy's inequality. Hence, in order to keep our construction interesting and worthwhile, we demand $k\leq2^{n+1}-3$. We explore the necessity of this bound on $k$ with the next two examples.
\end{remark}

\begin{example}\label{exn1k2}
Consider $n=1$, but $m=1$ and $k=2$. Here we have $\mathbf{\hat{p}}=p_1$, and consequently $\mathbf{\hat{q}}=2^{p_1}-1=q_1$ and $\mathbf{\hat{r}}=\frac{2^{p_1}+1}{3}=r_1r_2$. Following \eqref{degrees}, we get
$$
\cd(G_3)=\{1,2,p_1,2p_1,3q_1r_1r_2,2^{p_1}q_1,2^{p_1+1}q_1,2^{p_1+1}p_1q_1\}.
$$
The prime character degree graph $\Delta(G_3)$ can be seen in Figure \ref{fign1k2}, and we note that this example can be obtained by taking $p_1=107$, which then yields the Mersenne prime $q_1=162259276829213363391578010288127$, as well as the prime numbers $r_1=643$ and $r_2=84115747449047881488635567801$.
\begin{figure}[htb]
    \centering
    $
\begin{tikzpicture}[scale=2]
\node (2) at (0,1) {$2$};
\node (p) at (0,0) {$p_1$};
\node (q) at (1,.5) {$q_1$};
\node (r1) at (1.75,1) {$r_1$};
\node (r2) at (2.25,1) {$r_2$};
\node (3) at (2,0) {$3$};
\path[font=\small,>=angle 90]
(2) edge node [right] {$ $} (p)
(3) edge node [right] {$ $} (r1)
(3) edge node [right] {$ $} (r2)
(r1) edge node [right] {$ $} (r2)
(q) edge node [right] {$ $} (2)
(q) edge node [right] {$ $} (p)
(q) edge node [right] {$ $} (3)
(q) edge node [right] {$ $} (r1)
(q) edge node [right] {$ $} (r2);
\end{tikzpicture}
    $
    \caption{The graph $\Delta(G_3)$ corresponding to $n=1$, $m=1$, and $k=2$}
    \label{fign1k2}
\end{figure}
\end{example}

\begin{example}
Following Remark \ref{remPI}, we see that the graph in Figure \ref{fign1k2} can also be expressed as a direct product. This is a consequence of $k=2$ going above the bound of $2^{n+1}-3$. To show this, we again denote $A$ to be a finite solvable group such that $\Delta(A)$ is the singleton $K_1$. Furthermore, we denote $B$ as the finite solvable group whose prime character degree graph is the disconnected graph with complete components of sizes two and three. This graph was shown to occur in \cite{L5}. It is easy to see that $\Delta(G_3)$ in Figure \ref{fign1k2} is the same as the graph $\Delta(B\times A)$, which is shown in Figure \ref{figBA}. We again color-code the graph so that the blue represents the disconnected graph $\Delta(B)$, the red follows the singleton $\Delta(A)$, and the black represents the new edges built from the direct product, noting, once again, that we can guarantee that all primes considered are distinct. As before, the whole graph is $\Delta(B\times A)$.
\begin{figure}[htb]
    \centering
    $
\begin{tikzpicture}[scale=2]
\node (2) at (0,1) {\textcolor{blue}{$\bullet$}};
\node (p) at (0,0) {\textcolor{blue}{$\bullet$}};
\node (q) at (1,.5) {\textcolor{red}{$\bullet$}};
\node (r1) at (1.75,1) {\textcolor{blue}{$\bullet$}};
\node (r2) at (2.25,1) {\textcolor{blue}{$\bullet$}};
\node (3) at (2,0) {\textcolor{blue}{$\bullet$}};
\path[font=\small,>=angle 90,blue,thick]
(2) edge node [right] {$ $} (p)
(3) edge node [right] {$ $} (r1)
(3) edge node [right] {$ $} (r2)
(r1) edge node [right] {$ $} (r2);
\path[font=\small,>=angle 90,thick]
(q) edge node [right] {$ $} (2)
(q) edge node [right] {$ $} (p)
(q) edge node [right] {$ $} (3)
(q) edge node [right] {$ $} (r1)
(q) edge node [right] {$ $} (r2);
\end{tikzpicture}
    $
    \caption{The graph $\Delta(B\times A)$}
    \label{figBA}
\end{figure}
\end{example}

\begin{example}
Take $n=2$. Following the discussion in Remark \ref{remDP}, we see that $m\geq2^n-1=2^2-1=3$. For this example, we will take the minimum: $m=3$. Next, in regards to Remark \ref{remPI} for the bounds on $k$, we see that
$$
3=2^3-3-2=2^{n+1}-m-2\leq k\leq 2^{n+1}-3=2^3-3=5.
$$
Therefore, in order to avoid redundancies with direct products, there are only three options for $k$, which are $k=3$, $k=4$, or $k=5$. Regardless, we can write $\mathbf{\hat{p}}=p_1p_2$, and this then yields $\mathbf{\hat{q}}=2^{p_1p_2}-1=q_1q_2q_3$ and $\mathbf{\hat{r}}=\frac{2^{p_1p_2}+1}{3}=r_1r_2\cdots r_k$. Again employing \eqref{degrees}, we get that the set of character degrees is as follows:
\begin{align*}
\cd(G_{4|k})=\{1,&2,p_1,p_2,2p_1,2p_2,p_1p_2,2p_1p_2,3q_1q_2q_3r_1r_2\cdots r_k,2^{p_1p_2}q_1q_2q_3,\\
&2^{p_1p_2+1}q_1q_2q_3,2^{p_1p_2+1}p_1q_1q_2q_3,2^{p_1p_2+1}p_2q_1q_2q_3,2^{p_1p_2+1}p_1p_2q_1q_2q_3\}.
\end{align*}

The case of $k=3$ is attainable by taking $p_1=5$ and $p_2=17$. It is straightforward to get $q_1=31$, $q_2=131071$, and $q_3=9520972806333758431$. Moreover, one also gets $r_1=11$, $r_2=43691$, and $r_3=26831423036065352611$. One can verify that these are all indeed prime and that the corresponding prime character degree graph $\Delta(G_{4|3})$ is shown in Figure \ref{fig23}.
\begin{figure}[htb]
    \centering
    $
\begin{tikzpicture}[scale=1.5]
\node (p0) at (0,2) {$2$};
\node (p1) at (-.25,0) {$p_1$};
\node (p2) at (.25,0) {$p_2$};
\node (q1) at (2,1.25) {$q_1$};
\node (q2) at (1.75,.75) {$q_2$};
\node (q3) at (2.25,.75) {$q_3$};
\node (r1) at (3.75,2) {$r_1$};
\node (r2) at (4.25,2) {$r_2$};
\node (r3) at (4,1.5) {$r_3$};
\node (r0) at (4,0) {$3$};
\path[font=\small,>=angle 90]
(p0) edge node [right] {$ $} (p1)
(p0) edge node [right] {$ $} (p2)
(p1) edge node [right] {$ $} (p2)
(q1) edge node [right] {$ $} (q2)
(q1) edge node [right] {$ $} (q3)
(q2) edge node [right] {$ $} (q3)
(r0) edge node [right] {$ $} (r1)
(r0) edge node [right] {$ $} (r2)
(r0) edge node [right] {$ $} (r3)
(r1) edge node [right] {$ $} (r2)
(r1) edge node [right] {$ $} (r3)
(r2) edge node [right] {$ $} (r3)
(q1) edge node [right] {$ $} (p0)
(q1) edge node [right] {$ $} (p1)
(q1) edge node [right] {$ $} (p2)
(q1) edge node [right] {$ $} (r0)
(q1) edge node [right] {$ $} (r1)
(q1) edge node [right] {$ $} (r2)
(q1) edge node [right] {$ $} (r3)
(q2) edge node [right] {$ $} (p0)
(q2) edge node [right] {$ $} (p1)
(q2) edge node [right] {$ $} (p2)
(q2) edge node [right] {$ $} (r0)
(q2) edge node [right] {$ $} (r1)
(q2) edge node [right] {$ $} (r2)
(q2) edge node [right] {$ $} (r3)
(q3) edge node [right] {$ $} (p0)
(q3) edge node [right] {$ $} (p1)
(q3) edge node [right] {$ $} (p2)
(q3) edge node [right] {$ $} (r0)
(q3) edge node [right] {$ $} (r1)
(q3) edge node [right] {$ $} (r2)
(q3) edge node [right] {$ $} (r3);
\end{tikzpicture}
    $
    \caption{The graph $\Delta(G_{4|3})$ corresponding to $n=2$, $m=3$, and $k=3$}
    \label{fig23}
\end{figure}

For $k=4$, one can, for example, take $p_1=7$ and $p_2=19$, which results in $q_1=127$, $q_2=524287$, and $q_3=163537220852725398851434325720959$. Moreover, one also gets $r_1=43$, $r_2=4523$, $r_3=174763$, and $r_4=106788290443848295284382097033$. This yields the graph $\Delta(G_{4|4})$ in Figure \ref{fig24}.
\begin{figure}[htb]
    \centering
    $
\begin{tikzpicture}[scale=1.5]
\node (p0) at (0,2) {$2$};
\node (p1) at (-.25,0) {$p_1$};
\node (p2) at (.25,0) {$p_2$};
\node (q1) at (2,1.25) {$q_1$};
\node (q2) at (1.75,.75) {$q_2$};
\node (q3) at (2.25,.75) {$q_3$};
\node (r1) at (3.75,2) {$r_1$};
\node (r2) at (4.25,2) {$r_2$};
\node (r3) at (3.65,1.5) {$r_3$};
\node (r4) at (4.35,1.5) {$r_4$};
\node (r0) at (4,0) {$3$};
\path[font=\small,>=angle 90]
(p0) edge node [right] {$ $} (p1)
(p0) edge node [right] {$ $} (p2)
(p1) edge node [right] {$ $} (p2)
(q1) edge node [right] {$ $} (q2)
(q1) edge node [right] {$ $} (q3)
(q2) edge node [right] {$ $} (q3)
(r0) edge node [right] {$ $} (r1)
(r0) edge node [right] {$ $} (r2)
(r0) edge node [right] {$ $} (r3)
(r0) edge node [right] {$ $} (r4)
(r1) edge node [right] {$ $} (r2)
(r1) edge node [right] {$ $} (r3)
(r1) edge node [right] {$ $} (r4)
(r2) edge node [right] {$ $} (r3)
(r2) edge node [right] {$ $} (r4)
(r3) edge node [right] {$ $} (r4)
(q1) edge node [right] {$ $} (p0)
(q1) edge node [right] {$ $} (p1)
(q1) edge node [right] {$ $} (p2)
(q1) edge node [right] {$ $} (r0)
(q1) edge node [right] {$ $} (r1)
(q1) edge node [right] {$ $} (r2)
(q1) edge node [right] {$ $} (r3)
(q1) edge node [right] {$ $} (r4)
(q2) edge node [right] {$ $} (p0)
(q2) edge node [right] {$ $} (p1)
(q2) edge node [right] {$ $} (p2)
(q2) edge node [right] {$ $} (r0)
(q2) edge node [right] {$ $} (r1)
(q2) edge node [right] {$ $} (r2)
(q2) edge node [right] {$ $} (r3)
(q2) edge node [right] {$ $} (r4)
(q3) edge node [right] {$ $} (p0)
(q3) edge node [right] {$ $} (p1)
(q3) edge node [right] {$ $} (p2)
(q3) edge node [right] {$ $} (r0)
(q3) edge node [right] {$ $} (r1)
(q3) edge node [right] {$ $} (r2)
(q3) edge node [right] {$ $} (r3)
(q3) edge node [right] {$ $} (r4);
\end{tikzpicture}
    $
    \caption{The graph $\Delta(G_{4|4})$ corresponding to $n=2$, $m=3$, and $k=4$}
    \label{fig24}
\end{figure}

Finally, for $k=5$, we pick $p_1=5$ and $p_2=13$. This will result in $q_1=31$, $q_2=8191$, and $q_3=145295143558111$, as well as $r_1=11$, $r_2=131$, $r_3=2731$, $r_4=409891$, and $r_5=7623851$. This gives us the prime character degree graph $\Delta(G_{4|5})$ in Figure \ref{fig25}.
\begin{figure}[htb]
    \centering
    $
\begin{tikzpicture}[scale=1.5]
\node (p0) at (0,2) {$2$};
\node (p1) at (-.25,0) {$p_1$};
\node (p2) at (.25,0) {$p_2$};
\node (q1) at (2,1.25) {$q_1$};
\node (q2) at (1.75,.75) {$q_2$};
\node (q3) at (2.25,.75) {$q_3$};
\node (r1) at (3.75,2) {$r_1$};
\node (r2) at (4.25,2) {$r_2$};
\node (r3) at (3.65,1.5) {$r_3$};
\node (r4) at (4.35,1.5) {$r_4$};
\node (r5) at (4,1.25) {$r_5$};
\node (r0) at (4,0) {$3$};
\path[font=\small,>=angle 90]
(p0) edge node [right] {$ $} (p1)
(p0) edge node [right] {$ $} (p2)
(p1) edge node [right] {$ $} (p2)
(q1) edge node [right] {$ $} (q2)
(q1) edge node [right] {$ $} (q3)
(q2) edge node [right] {$ $} (q3)
(r0) edge node [right] {$ $} (r1)
(r0) edge node [right] {$ $} (r2)
(r0) edge node [right] {$ $} (r3)
(r0) edge node [right] {$ $} (r4)
(r0) edge node [right] {$ $} (r5)
(r1) edge node [right] {$ $} (r2)
(r1) edge node [right] {$ $} (r3)
(r1) edge node [right] {$ $} (r4)
(r1) edge node [right] {$ $} (r5)
(r2) edge node [right] {$ $} (r3)
(r2) edge node [right] {$ $} (r4)
(r2) edge node [right] {$ $} (r5)
(r3) edge node [right] {$ $} (r4)
(r3) edge node [right] {$ $} (r5)
(r4) edge node [right] {$ $} (r5)
(q1) edge node [right] {$ $} (p0)
(q1) edge node [right] {$ $} (p1)
(q1) edge node [right] {$ $} (p2)
(q1) edge node [right] {$ $} (r0)
(q1) edge node [right] {$ $} (r1)
(q1) edge node [right] {$ $} (r2)
(q1) edge node [right] {$ $} (r3)
(q1) edge node [right] {$ $} (r4)
(q1) edge node [right] {$ $} (r5)
(q2) edge node [right] {$ $} (p0)
(q2) edge node [right] {$ $} (p1)
(q2) edge node [right] {$ $} (p2)
(q2) edge node [right] {$ $} (r0)
(q2) edge node [right] {$ $} (r1)
(q2) edge node [right] {$ $} (r2)
(q2) edge node [right] {$ $} (r3)
(q2) edge node [right] {$ $} (r4)
(q2) edge node [right] {$ $} (r5)
(q3) edge node [right] {$ $} (p0)
(q3) edge node [right] {$ $} (p1)
(q3) edge node [right] {$ $} (p2)
(q3) edge node [right] {$ $} (r0)
(q3) edge node [right] {$ $} (r1)
(q3) edge node [right] {$ $} (r2)
(q3) edge node [right] {$ $} (r3)
(q3) edge node [right] {$ $} (r4)
(q3) edge node [right] {$ $} (r5);
\end{tikzpicture}
    $
    \caption{The graph $\Delta(G_{4|5})$ corresponding to $n=2$, $m=3$, and $k=5$}
    \label{fig25}
\end{figure}
\end{example}

It is worth mentioning that by taking $n\geq3$, obtaining the minimum value of $m$ may be impossible. By Conjecture 4.3 on page 351 of \cite{DHS}, they hypothesize, in our context, that for $n\geq3$ we will have $m\geq2^n$.

\begin{example}
For $n=3$, notice that $m\geq2^n-1=2^3-1=7$. As per above, perhaps the minimum example is actually $m=8$. However, due to the computation becoming unruly as the product of primes becomes quite large, along with $\mathbf{\hat{p}}$ having neither $2$ nor $3$ as a divisor, the smallest $m$ we came across is $m=11$. This can be obtained by taking $p_1=5$, $p_2=7$, and $p_3=19$. Furthermore, under these primes, we then get $k=11$, which is permitted due to $k\leq2^{n+1}-3=2^4-3=13$.
\end{example}

We attain another example with $n=3$ by taking $p_1=5$, $p_2=13$, and $p_3=17$, which yield $m=12$ and $k=13$. One can also take $p_1=5$, $p_2=7$, and $p_3=17$, giving $m=14$ and $k=12$. As the number of primes increases, the number of edges makes the graph hard to follow. This is why we adopt the condensed version of the graph seen in Figure \ref{figpqr}.


\end{document}